\let\objectstyle=\displaystyle
\DeclareMathAlphabet{\EuRm}{U}{eur}{m}{n}
\SetMathAlphabet{\EuRm}{bold}{U}{eur}{b}{n}
\let\c@equation\c@subsection
\theoremstyle{plain}   
\newtheoremstyle{slplain}
  {.5\baselineskip\@plus.2\baselineskip\@minus.2\baselineskip}
  {.5\baselineskip\@plus.2\baselineskip\@minus.2\baselineskip}
  {\slshape}
  {}
  {\bfseries}
  {.}
  { }
  {}
\theoremstyle{slplain}
\newtheorem{thm}[equation]{Theorem}  
\newtheorem{lem}[equation]{Lemma}         
\newtheorem{prop}[equation]{Proposition}
\theoremstyle{definition}
\newtheorem{defn}[equation]{Definition} 
\theoremstyle{remark}
\newcommand{\thmref}{Theorem~\ref}
\newcommand{\propref}{Proposition~\ref}
\newcommand{\lemref}{Lemma~\ref}
\newcommand{\defref}{Definition~\ref}
\newcommand{\clearemptydoublepage}%
  {\newpage
    {\pagestyle{empty}%
      \cleardoublepage
    } }
\newcommand{\Period}{\rlap{\enspace .}}
\newcommand{\suchthat}{\bigm|}
\newcommand{\smallcoprod}{\mathchoice{\amalg}%
               {\amalg}%
               {{\scriptscriptstyle{\amalg}}}%
               {{\scriptscriptstyle{\amalg}}}}
\newcommand{\undercat}[2]{%
  {(\mathord{#2}\nonscript\,\mathord\downarrow\nonscript\,\mathord{#1})}}
\newcommand{\overcat}[2]{%
  {(\mathord{#1}\nonscript\,\mathord\downarrow\nonscript\,\mathord{#2})}}
\newcommand{\adj}[4]{#1\negmedspace: #2\rightleftarrows #3:\negmedspace #4}
\newcommand{\cat}[1]{\mathcal{#1}}
\newcommand{\Id}{1}
\newcommand{\fakewidth}[2]{%
  \begingroup
    \setbox0 =\hbox{#2}%
    \hbox to \wd0{\hss #1\hss}%
  \endgroup
}
\newcommand{\F}{\mathrm{F}}
\newcommand{\G}{\mathrm{G}}
\newcommand{\U}{\mathrm{U}}
\newcommand{\mapunder}[3]{
  \vcenter{
    \def\objectstyle{\scriptstyle}
    \def\labelstyle{\scriptstyle}
    \xymatrix@C=0em@R=1ex{
      & {#1} \ar[dl] \ar[dr]\\
      {#2} \ar[rr]
      && {#3}
      }
    }
}
\newcommand{\labelmapunder}[4]{
  \vcenter{
    \def\objectstyle{\scriptstyle}
    \def\labelstyle{\scriptstyle}
    \xymatrix@C=0em@R=1ex{
      & {#1} \ar[dl] \ar[dr]\\
      {#2} \ar[rr]_-{#4}
      && {#3}
      }
    }
}
\newcommand{\mapover}[3]{
  \vcenter{
    \def\objectstyle{\scriptstyle}
    \def\labelstyle{\scriptstyle}
      \xymatrix@C=0em@R=1ex{
        {#1} \ar[rr] \ar[dr]
        && {#2} \ar[dl]\\
        & {#3}
      }
    }
}
\newcommand{\labelmapover}[4]{
  \vcenter{
    \def\objectstyle{\scriptstyle}
    \def\labelstyle{\scriptstyle}
      \xymatrix@C=0em@R=1ex{
        {#1} \ar[rr]^-{#4} \ar[dr]
        && {#2} \ar[dl]\\
        & {#3}
      }
    }
}
\begin{document}

\title[Overcategories and undercategories of model
categories]{Overcategories and undercategories\\of model categories} 

\author{Philip S. Hirschhorn}

\address{Department of Mathematics\\
   Wellesley College\\
   Wellesley, Massachusetts 02481}

\email{psh@math.mit.edu}

\urladdr{http://www-math.mit.edu/\textasciitilde psh}

\date{June 11, 2005}

\maketitle

If $\cat M$ is a model category and $Z$ is an object of $\cat M$, then
there are model category structures on the categories $\overcat{\cat
  M}{Z}$ (the category of objects of $\cat M$ over $Z$) and
$\undercat{\cat M}{Z}$ (the category of objects of $\cat M$ under $Z$)
under which a map is a cofibration, fibration, or weak equivalence if
and only if its image in $\cat M$ under the forgetful functor is,
respectively, a cofibration, fibration, or weak equivalence.  It is
asserted without proof in \cite{MCL} that if $\cat M$ is cofibrantly
generated, cellular, or proper, then so is the overcategory
$\overcat{\cat M}{Z}$.  The purpose of this note is to fill in the
proofs of those assertions (see \thmref{thm:Over}) and to state and
prove the analogous results for undercategories (see
\thmref{thm:Under}).

\section{Overcategories}
\label{sec:Overcat}

\begin{defn}
  \label{def:OvCat}
  If $\cat M$ is a category and $Z$ is an object of $\cat M$, then the
  category $\overcat{\cat M}{Z}$ of \emph{objects of $\cat M$ over
    $Z$} is the category in which
  \begin{itemize}
  \item an object is a map $X \to Z$ in $\cat M$,
  \item a map from $X \to Z$ to $Y \to Z$ is a map $X \to Y$ in $\cat
    M$ such that the triangle
    \begin{displaymath}
      \xymatrix@=1em{
        {X} \ar[rr] \ar[dr]
        && {Y} \ar[dl]\\
        & {Z}
      }
    \end{displaymath}
    commutes, and
  \item composition of maps is defined by composition of maps in $\cat
    M$.
  \end{itemize}
\end{defn}

\begin{defn}
  \label{def:forgetful}
  If $\cat M$ is a category and $Z$ is an object of $\cat M$, then the
  \emph{forgetful functor} $\G\colon \overcat{\cat M}{Z} \to \cat M$
  is the functor that takes the object $A \to Z$ of $\overcat{\cat
    M}{Z}$ to the object $A$ of $\cat M$ and the map
  $\mapover{A}{B}{Z}$ of $\overcat{\cat M}{Z}$ to the map $A \to B$ of
  $\cat M$.
\end{defn}

\begin{lem}
  \label{lem:SamePush}
  Let $\cat M$ be a cocomplete and complete category and let $Z$ be an
  object of $\cat M$.
  \begin{enumerate}
  \item The pushout in $\overcat{\cat M}{Z}$ of the diagram
    \begin{displaymath}
      \xymatrix{
        {C} \ar[dr]
        & {A} \ar[l] \ar[d] \ar[r]
        & {B} \ar[dl]\\
        & {Z}
      }
    \end{displaymath}
    is $P \to Z$ where $P$ is the pushout in $\cat M$ of the diagram
    \begin{displaymath}
      \xymatrix{
        {C}
        & {A} \ar[l] \ar[r]
        & {B}
      }
    \end{displaymath}
    and the structure map $P \to Z$ is the natural map from the
    pushout in $\cat M$.
  \item The pullback in $\overcat{\cat M}{Z}$ of the diagram
    \begin{displaymath}
      \xymatrix{
        {X} \ar[r] \ar[dr]
        & {Y} \ar[d]
        & {W} \ar[l] \ar[dl]\\
        & {Z}
      }
    \end{displaymath}
    is $P \to Z$ where $P$ is the pullback in $\cat M$ of the diagram
    \begin{displaymath}
      \xymatrix{
        {X} \ar[r]
        & {Y}
        & {W} \ar[l]
      }
    \end{displaymath}
    and the structure map $P \to Z$ is the composition $P \to Y \to
    Z$.
  \end{enumerate}
\end{lem}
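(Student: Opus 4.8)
The plan is to verify each part by checking the relevant universal property directly, using the fact that $\overcat{\cat M}{Z}$ is both complete and cocomplete (slice categories inherit (co)completeness from $\cat M$) so that the (co)limits in question exist. The elementary observation underlying everything is that a cocone (resp.\ cone) in $\overcat{\cat M}{Z}$ on a diagram is the same thing as a cocone (resp.\ cone) in $\cat M$ on the underlying diagram whose legs are compatible with all the structure maps to $Z$; the only point requiring any care is pinning down the structure map of the (co)limit object, since that map is genuinely part of the data.

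For part~(1), I would first note that the forgetful functor $\G\colon \overcat{\cat M}{Z} \to \cat M$ has a right adjoint, namely $X \mapsto \bigl(\pr\colon X \times Z \to Z\bigr)$ (a map $A \to X \times Z$ over $Z$ is the same as a map $A \to X$ together with the given map $A \to Z$). Hence $\G$ preserves all colimits, in particular pushouts, so if $Q \to Z$ is the pushout in $\overcat{\cat M}{Z}$ of the given span then $Q = \G(Q \to Z)$ is the pushout $P$ of $C \leftarrow A \to B$ in $\cat M$. The structure map $Q \to Z$ is then the unique map making the pushout cocone compatible with the maps $C \to Z$ and $B \to Z$, which is precisely the natural map out of $P$ induced by those two maps (which agree on $A$). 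If one prefers to avoid the adjoint, the same conclusion comes from a direct chase: given a test object $T \to Z$ receiving compatible maps from $C \to Z$ and $B \to Z$, the underlying data determines a unique map $P \to T$ in $\cat M$, and the two maps $P \to T \to Z$ and the candidate structure map $P \to Z$ agree after restriction to $C$ and to $B$, hence coincide by the uniqueness clause in the universal property of $P$.

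For part~(2), the forgetful functor need not preserve limits, so I would argue instead from the fact that the indexing category of a pullback, $\bullet \to \bullet \leftarrow \bullet$, is connected. For a connected diagram, any cone in $\cat M$ on the underlying diagram has all of its composites to $Z$ equal — trace the structure maps along the two arrows of the diagram — so it automatically carries a well-defined map to $Z$ and is thus a cone in $\overcat{\cat M}{Z}$, while conversely a cone in $\overcat{\cat M}{Z}$ forgets to such a cone. Concretely: let $P$ be the pullback of $X \to Y \leftarrow W$ in $\cat M$ with its projections to $X$, $Y$, $W$, and equip it with the structure map $P \to Y \to Z$ (equivalently $P \to X \to Z$ or $P \to W \to Z$, which coincide). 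Given a test object $T \to Z$ with maps to $X \to Z$ and $W \to Z$ agreeing in $Y \to Z$, the underlying maps $T \to X$ and $T \to W$ agree in $Y$, hence factor through a unique $T \to P$ in $\cat M$; this factorization lies over $Z$ because $T \to P \to X \to Z$ is the structure map of $T$, and it is plainly the unique such map. So $P \to Z$ is the pullback in $\overcat{\cat M}{Z}$.

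I do not expect a genuine obstacle: both parts are routine universal-property verifications. The one thing to watch is that the two statements are not literally dual for the forgetful functor — colimits in the slice over $Z$ are always computed below, whereas limits in the slice over $Z$ are computed below only for connected diagrams — so the pushout and pullback cases call for slightly different reasoning, and in each case one must check that the asserted map is the structure map.
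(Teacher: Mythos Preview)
Your argument is correct. The paper's own proof is a single sentence: it simply asserts that the described objects have the required universal properties and leaves the verification to the reader. Your direct universal-property checks in each part are exactly that verification spelled out, so at that level your proof and the paper's coincide.

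Where you go beyond the paper is in the structural framing. For part~(1) you invoke the right adjoint $X \mapsto (X\times Z \to Z)$ of the forgetful functor to explain \emph{why} pushouts in $\overcat{\cat M}{Z}$ are computed in $\cat M$, and for part~(2) you isolate the connectedness of the pullback shape as the reason the forgetful functor, which does not preserve arbitrary limits, nonetheless preserves this one. Neither observation appears in the paper, and both are genuinely informative: they make clear that the two halves of the lemma are not formally dual for the forgetful functor and explain the asymmetry you flag at the end. The cost is only that the adjoint argument for part~(1) uses completeness of $\cat M$ (to form $X\times Z$), which the bare universal-property check does not need---but since the hypotheses already include completeness, nothing is lost.
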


\begin{proof}
  The described constructions possess the universal mapping properties
  that characterize the pushout (or pullback) in $\overcat{\cat
    M}{Z}$.
\end{proof}

\begin{lem}
  \label{lem:RelCelOver}
  Let $\cat M$ be a model category and let $Z$ be an object of $\cat
  M$.  If $S$ is a set of maps in $\cat M$ and $S_{Z}$ is the set of
  maps in $\overcat{\cat M}{Z}$ of the form
  \begin{displaymath}
    \xymatrix@=1em{
      {A} \ar[rr] \ar[dr]
      && {B} \ar[dl]\\
      & {Z}
    }
  \end{displaymath}
  in which the map $A \to B$ is an element of $S$, then a map
  $\mapover{X}{Y}{Z}$ in $\overcat{\cat M}{Z}$ is a relative
  $S_{Z}$-cell complex (see \cite[Definition~10.5.8]{MCL}) if and only
  if the map $X \to Y$ in $\cat M$ is a relative $S$-cell complex.
\end{lem}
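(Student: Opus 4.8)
The plan is to prove both implications by transporting the cellular presentation across the forgetful functor $\G\colon \overcat{\cat M}{Z} \to \cat M$. The two facts that make this work are \lemref{lem:SamePush}(1), which says that $\G$ carries a pushout of an element of $S_Z$ to the pushout in $\cat M$ of the corresponding element of $S$, and the observation that the colimit in $\overcat{\cat M}{Z}$ of a $\lambda$-sequence is computed in $\cat M$, with the structure map to $Z$ induced from the colimit. The latter is proved by the same universal-property argument as \lemref{lem:SamePush}; in particular it shows that $\G$ preserves all such colimits.

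For the \emph{only if} direction I would take a presentation of $\mapover{X}{Y}{Z}$ as a relative $S_Z$-cell complex (in the sense of \cite[Definition~10.5.8]{MCL}), i.e.\ a $\lambda$-sequence $X = X_0 \to X_1 \to \cdots$ in $\overcat{\cat M}{Z}$ with colimit $Y$ in which each $X_\beta \to X_{\beta+1}$ is a pushout of an element of $S_Z$, and simply apply $\G$. Since $\G$ preserves colimits, the image is again a $\lambda$-sequence (continuity at limit ordinals and the final colimit are preserved) with colimit $Y$, and by \lemref{lem:SamePush}(1) each successor map becomes, in $\cat M$, a pushout of the underlying element of $S$; so $X \to Y$ is a relative $S$-cell complex.

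For the converse the idea is to lift a presentation of $X \to Y$ as a relative $S$-cell complex to $\overcat{\cat M}{Z}$. Given the $\lambda$-sequence $X = X_0 \to X_1 \to \cdots$ with $\colim_\beta X_\beta = Y$ and each $X_\beta \to X_{\beta+1}$ a pushout of an element of $S$, I would use the structure map $Y \to Z$ (available because $\mapover{X}{Y}{Z}$ is by hypothesis a map of $\overcat{\cat M}{Z}$) to make each $X_\beta$ an object over $Z$, via the composite $X_\beta \to Y \to Z$; these structure maps are compatible with all the transition maps, yielding a $\lambda$-sequence $\Xtilde_\bullet$ in $\overcat{\cat M}{Z}$. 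For each $\beta$ the pushout square in $\cat M$ defining $X_\beta \to X_{\beta+1}$ from some $(A \to B) \in S$ has all four objects lying over $Z$ compatibly (each via the composite down to $Y$ and then to $Z$), so by \lemref{lem:SamePush}(1) it is also a pushout square in $\overcat{\cat M}{Z}$, and $A \to B$ with these induced structure maps is an element of $S_Z$. The colimit of $\Xtilde_\bullet$ in $\overcat{\cat M}{Z}$ is $Y$ equipped with the structure map induced from the colimit, which by uniqueness of maps out of a colimit coincides with the given map $Y \to Z$. Hence this presentation exhibits $\mapover{X}{Y}{Z}$ as a relative $S_Z$-cell complex.

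The argument is essentially formal, and the only points that need care are verifying that the lifted diagram $\Xtilde_\bullet$ really is a $\lambda$-sequence (continuous at limit ordinals) and that each defining square commutes over $Z$ so that \lemref{lem:SamePush}(1) applies; both follow from the uniqueness clause in the universal property of the relevant colimits, using that every structure map in sight was obtained by first mapping to $Y$ and then to $Z$.
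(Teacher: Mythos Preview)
Your proposal is correct and follows exactly the approach the paper takes: the paper's proof is the single line ``This follows from \lemref{lem:SamePush},'' and what you have written is a careful unpacking of precisely that reduction, including the transfinite-composition aspect that the paper leaves implicit.
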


\begin{proof}
  This follows from \lemref{lem:SamePush}.
\end{proof}

\begin{thm}
  \label{thm:OvCtCG}
  Let $\cat M$ be a cofibrantly generated model category (see
  \cite[Definition~11.1.2]{MCL}) with generating cofibrations $I$ and
  generating trivial cofibration $J$, and let $Z$ be an object of
  $\cat M$.  If
  \begin{enumerate}
  \item $I_{Z}$ is the set of maps in $\overcat{\cat M}{Z}$ of the form
    \begin{equation}
      \label{eq:OvCtCG}
      \xymatrix@=1em{
        {A} \ar[rr] \ar[dr]
        && {B} \ar[dl]\\
        & {Z}
      }
    \end{equation}
    in which the map $A \to B$ is an element of $I$ and
  \item $J_{Z}$ is the set of maps in $\overcat{\cat M}{Z}$ of the
    form \eqref{eq:OvCtCG} in which the map $A \to B$ is an element of
    $J$,
  \end{enumerate}
  then the standard model category structure on $\overcat{\cat M}{Z}$
  (in which a map $\mapover{X}{Y}{Z}$ is a cofibration, fibration, or
  weak equivalence in $\overcat{\cat M}{Z}$ if and only if the map $X
  \to Y$ is, respectively, a cofibration, fibration, or weak
  equivalence in $\cat M$) is cofibrantly generated, with generating
  cofibrations $I_{Z}$ and generating trivial cofibrations $J_{Z}$.
\end{thm}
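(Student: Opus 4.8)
The strategy is to verify the criterion of \cite[Theorem~11.3.1]{MCL}: a category that is complete, cocomplete, and has two classes of maps (weak equivalences and the would-be cofibrations) admits a cofibrantly generated model structure with generating (trivial) cofibrations $I_Z$ (resp.\ $J_Z$) provided that the weak equivalences are closed under retracts and satisfy two-out-of-three, that the domains of $I_Z$ and $J_Z$ are small relative to $I_Z$-cell (resp.\ $J_Z$-cell), that $J_Z$-cell complexes are weak equivalences and $I_Z$-injectives, and that $I_Z$-injective $=$ $J_Z$-injective $\cap$ (weak equivalence), or dually that an appropriate matching of the two classes holds. So the proof reduces to checking these hypotheses for $\overcat{\cat M}{Z}$ by pulling everything back to $\cat M$ via the forgetful functor $\G$.

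First I would note that $\overcat{\cat M}{Z}$ is complete and cocomplete because $\cat M$ is (limits and colimits are created by $\G$, as in \lemref{lem:SamePush}), and that the weak equivalences in $\overcat{\cat M}{Z}$ inherit two-out-of-three and closure under retracts from $\cat M$ since $\G$ preserves and reflects them and preserves retracts. Next, the key translation: a map in $\overcat{\cat M}{Z}$ has the left (or right) lifting property with respect to $S_Z$ if and only if its image under $\G$ has the corresponding lifting property with respect to $S$ in $\cat M$ — because a lifting problem in $\overcat{\cat M}{Z}$ is exactly a lifting problem in $\cat M$ together with the (automatically satisfied, by commutativity over $Z$) compatibility with the structure maps to $Z$. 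From this it follows at once that the $I_Z$-injectives are precisely the maps whose image is $I$-injective, hence (by cofibrant generation of $\cat M$) the trivial fibrations of $\overcat{\cat M}{Z}$, and likewise $J_Z$-injectives are the fibrations; and by Quillen's small object argument applied in $\cat M$ together with \lemref{lem:RelCelOver}, every map factors appropriately.

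For the smallness hypotheses I would observe that the domain of a map in $I_Z$ or $J_Z$ is an object $A \to Z$ whose underlying object is a domain of a map in $I$ or $J$; since $\G$ sends relative $I_Z$-cell complexes to relative $I$-cell complexes (\lemref{lem:RelCelOver}) and preserves the relevant colimits, smallness of $A$ relative to $I$-cell in $\cat M$ gives smallness of $A \to Z$ relative to $I_Z$-cell in $\overcat{\cat M}{Z}$, and similarly for $J$. Finally, that relative $J_Z$-cell complexes are weak equivalences follows from \lemref{lem:RelCelOver} (their images are relative $J$-cell complexes, hence trivial cofibrations in $\cat M$, hence weak equivalences), and that they are $I_Z$-cofibrations with the left lifting property against $I_Z$-injectives follows from the lifting-property translation above. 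Assembling these facts, \cite[Theorem~11.3.1]{MCL} yields the cofibrantly generated model structure on $\overcat{\cat M}{Z}$ with generating cofibrations $I_Z$ and generating trivial cofibrations $J_Z$, and this model structure coincides with the standard one because both are determined by the same classes of fibrations and weak equivalences.

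The main obstacle is not any single deep point but rather the bookkeeping in the lifting-property translation: one must check carefully that a commutative lifting square in $\overcat{\cat M}{Z}$ contains exactly the same data as a lifting square in $\cat M$, so that lifts correspond bijectively, and in particular that the diagonal filler in $\cat M$ automatically respects the maps to $Z$ (which it does, because the codomain's structure map composed with the filler and the already-commuting lower triangle force the compatibility). Once this correspondence is in place, every remaining hypothesis of \cite[Theorem~11.3.1]{MCL} transfers mechanically from $\cat M$ via \lemref{lem:SamePush} and \lemref{lem:RelCelOver}.
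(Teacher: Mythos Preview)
Your argument is correct, but it takes a different route from the paper's. The paper does not invoke the recognition theorem \cite[Theorem~11.3.1]{MCL}; since the model structure on $\overcat{\cat M}{Z}$ is already known to exist, it instead verifies \cite[Definition~11.1.2]{MCL} directly, checking only that $I_{Z}$ permits the small object argument and that the $I_{Z}$-injectives are exactly the trivial fibrations (and similarly for $J_{Z}$). For the nontrivial direction of the latter it does \emph{not} use your lifting-problem translation; rather, it shows that every cofibration in $\overcat{\cat M}{Z}$ is a retract of a relative $I_{Z}$-cell complex by factoring the underlying map in $\cat M$ as a relative $I$-cell complex followed by a trivial fibration and applying the retract argument, then invoking \lemref{lem:RelCelOver}. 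Your lifting-property translation is in fact the cleaner device---once you know that lifting squares in $\overcat{\cat M}{Z}$ against $I_{Z}$ correspond bijectively to lifting squares in $\cat M$ against $I$, the identification $I_{Z}\text{-inj} = \text{trivial fibrations}$ is immediate and you could verify Definition~11.1.2 directly without the detour through Theorem~11.3.1. The paper's approach is shorter because it does not re-establish the model-category axioms, while your approach is more uniform and in fact mirrors what the paper itself does for the undercategory case in \thmref{thm:UndCofGen}, where it \emph{does} use \cite[Theorem~11.3.2]{MCL} to build the structure and then compare.
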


\begin{proof}
  We will show that the set $I_{Z}$ permits the small object argument
  and that a map is a trivial fibration if and only if it has the
  right lifting property with respect to $I_{Z}$; the proof of the
  analogous statement for $J_{Z}$ is similar.
  
  \lemref{lem:SamePush} implies that the forgetful functor $\G\colon
  \overcat{\cat M}{Z} \to \cat M$ (see \defref{def:forgetful}) takes a
  relative $I_{Z}$-cell complex in $\overcat{\cat M}{Z}$ to a relative
  $I$-cell complex in $\cat M$, and so the set $I_{Z}$ permits the
  small object argument.
  
  Since every element of $I_{Z}$ is a cofibration in $\overcat{\cat
    M}{Z}$, every trivial fibration in $\overcat{\cat M}{Z}$ has the
  right lifting property with respect to every element of $I_{Z}$.  To
  show that every map with the right lifting property with respect to
  $I_{Z}$ is a trivial fibration, it is sufficient to show that every
  cofibration is a retract of a relative $I_{Z}$-cell complex (see
  \cite[Proposition~10.3.2]{MCL}).  Let $\mapover{X}{Y}{Z}$ be a
  cofibration in $\overcat{\cat M}{Z}$; then the map $X \to Y$ is a
  cofibration in $\cat M$, and we can factor it as $X \to W \to Y$ in
  $\cat M$ where $X \to W$ is a relative $I$-cell complex and $W \to
  Y$ is a trivial fibration.  Since $\mapover{W}{Y}{Z}$ is a trivial
  fibration in $\undercat{\cat M}{Z}$, the retract argument
  (\cite[Proposition~7.2.2]{MCL}) now implies that $\mapover{X}{Y}{Z}$
  is a retract of $\mapover{X}{W}{Z}$, and \lemref{lem:RelCelOver}
  implies that $\mapover{X}{W}{Z}$ is a relative $I_{Z}$-cell complex.
\end{proof}

\begin{thm}
  \label{thm:Over}
  Let $\cat M$ be a model category and let $Z$ be an object of $\cat
  M$.
  \begin{enumerate}
  \item If $\cat M$ is cofibrantly generated, then so is
    $\overcat{\cat M}{Z}$.
  \item If $\cat M$ is cellular, then so is $\overcat{\cat M}{Z}$.
  \item If $\cat M$ is left proper, right proper, or proper, then so
    is $\overcat{\cat M}{Z}$.
  \end{enumerate}
\end{thm}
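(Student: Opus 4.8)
The plan is to establish the three parts in turn, using throughout \thmref{thm:OvCtCG}, \lemref{lem:SamePush}, \lemref{lem:RelCelOver}, and the standard fact that the forgetful functor $\G\colon \overcat{\cat M}{Z} \to \cat M$ (see \defref{def:forgetful}) creates all colimits and all connected limits; in particular coproducts and pushouts, as well as equalizers, are formed in $\cat M$. Part~(1) is immediate: \thmref{thm:OvCtCG} exhibits generating cofibrations $I_{Z}$ and generating trivial cofibrations $J_{Z}$ for the standard model structure on $\overcat{\cat M}{Z}$, built from any choice of generating sets $I$ and $J$ for $\cat M$. Fix such choices; for part~(2) we take $I$ and $J$ to be generating sets witnessing that $\cat M$ is cellular.

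For part~(2), recall that a cellular model category is a cofibrantly generated one admitting generating cofibrations $I$ and generating trivial cofibrations $J$ such that (a)~the domains and codomains of the elements of $I$ are compact, (b)~the domains of the elements of $J$ are small relative to $I$, and (c)~every cofibration is an effective monomorphism (see \cite[Definition~12.1.1]{MCL}); I would verify (a)--(c) for $I_{Z}$ and $J_{Z}$. The domains and codomains of the elements of $I_{Z}$ (resp.\ $J_{Z}$) are the objects $A \to Z$ of $\overcat{\cat M}{Z}$ whose total object $A$ is a domain or codomain of an element of $I$ (resp.\ $J$), and by \lemref{lem:SamePush} the functor $\G$ takes a relative $I_{Z}$-cell complex, equipped with a presentation, to a relative $I$-cell complex with a presentation having the same cells, inducing a bijection between their subcomplexes that is compatible with structure maps to $Z$. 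For~(a): a map in $\overcat{\cat M}{Z}$ from $A \to Z$ into such a relative cell complex becomes, under $\G$, a map out of the compact object $A$, hence factors through a subcomplex with a bounded number of cells; since the structure map of a subcomplex to $Z$ is the restriction of that of the whole complex, this factorization automatically lies over $Z$, so $A \to Z$ is compact in $\overcat{\cat M}{Z}$ with the same compactness cardinal. For~(b): given a $\lambda$-sequence of pushouts of coproducts of elements of $I_{Z}$, \lemref{lem:SamePush} identifies its image under $\G$ with such a sequence built from $I$; and since $\overcat{\cat M}{Z}(A \to Z, W \to Z)$ is the fiber of $\cat M(A,W) \to \cat M(A,Z)$ over the structure map $A \to Z$, and finite limits commute with filtered colimits in $\Set$, smallness of $A$ relative to $I$ forces smallness of $A \to Z$ relative to $I_{Z}$, with the same cardinal. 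For~(c): if $\mapover{X}{Y}{Z}$ is a cofibration in $\overcat{\cat M}{Z}$ then $X \to Y$ is a cofibration in $\cat M$, hence the equalizer in $\cat M$ of the two natural maps $Y \rightrightarrows \pushout{Y}{X}{Y}$; by \lemref{lem:SamePush} the pushout $\pushout{Y}{X}{Y}$ is the same when formed in $\overcat{\cat M}{Z}$, and since $\G$ creates equalizers the same diagram exhibits $\mapover{X}{Y}{Z}$ as an effective monomorphism there.

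For part~(3), suppose first that $\cat M$ is left proper, and consider a pushout in $\overcat{\cat M}{Z}$ of a weak equivalence along a cofibration. By \lemref{lem:SamePush}(1) and the definition of the model structure on $\overcat{\cat M}{Z}$, applying $\G$ yields a pushout in $\cat M$ of a weak equivalence along a cofibration, so the induced map is a weak equivalence in $\cat M$ and hence in $\overcat{\cat M}{Z}$; thus $\overcat{\cat M}{Z}$ is left proper. Dually, if $\cat M$ is right proper, \lemref{lem:SamePush}(2) shows that a pullback in $\overcat{\cat M}{Z}$ of a weak equivalence along a fibration maps under $\G$ to a pullback in $\cat M$ of a weak equivalence along a fibration, hence to a weak equivalence; so $\overcat{\cat M}{Z}$ is right proper. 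The proper case follows by combining the two.

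I expect parts~(1) and~(3), and the effective-monomorphism clause of~(2), to be routine. The main obstacle is the compactness clause of part~(2): one must check carefully that a factorization through a subcomplex of a relative $I$-cell complex in $\cat M$ can be promoted to a morphism of $\overcat{\cat M}{Z}$, which rests on making precise how presentations, subcomplexes, and their structure maps transport along $\G$ --- exactly the bookkeeping that \lemref{lem:SamePush} and \lemref{lem:RelCelOver} are designed to handle. The smallness clause is analogous but additionally invokes the interchange of filtered colimits with finite limits in $\Set$.
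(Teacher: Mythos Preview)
Your proposal is correct and follows essentially the same approach as the paper, invoking \thmref{thm:OvCtCG} for part~(1), \thmref{thm:OvCtCG} together with \lemref{lem:RelCelOver} (and \lemref{lem:SamePush}) for part~(2), and \lemref{lem:SamePush} for part~(3). The paper's proof is a one-line citation of these results, whereas you have spelled out the verification of the three cellularity conditions in detail; your expanded argument is sound, including the observation that $\G$ creates equalizers and the fiber description of hom-sets used for smallness.
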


\begin{proof}
  Part~1 follows from \thmref{thm:OvCtCG}, part~2 follows from
  \thmref{thm:OvCtCG} and \lemref{lem:RelCelOver}, and part~3 follows
  from \lemref{lem:SamePush}.
\end{proof}

\section{Undercategories}
\label{sec:UnderCat}

\begin{defn}
  \label{def:UndCat}
  If $\cat M$ is a category and $Z$ is an object of $\cat M$, then the
  category $\undercat{\cat M}{Z}$ of \emph{objects of $\cat M$ under
    $Z$} is the category in which
  \begin{itemize}
  \item an object is a map $Z \to X$ in $\cat M$,
  \item a map from $Z \to X$ to $Z \to Y$ is a map $X \to Y$ in $\cat
    M$ such that the triangle
    \begin{displaymath}
      \xymatrix@=1em{
        & {Z} \ar[dl] \ar[dr]\\
        {X} \ar[rr]
        && {Y}
      }
    \end{displaymath}
    commutes, and
  \item composition of maps is defined by composition of maps in $\cat
    M$.
  \end{itemize}
\end{defn}

\begin{prop}
  \label{prop:UndAdj}
  If $\cat M$ is a cocomplete category and $Z$ is an object of $\cat
  M$, then the forgetful functor $\U\colon \undercat{\cat M}{Z} \to
  \cat M$ that takes the object $Z \to Y$ to $Y$ is right adjoint to
  the functor $\F\colon \cat M \to \undercat{\cat M}{Z}$ that takes
  the object $X$ of $\cat M$ to $Z \to Z\smallcoprod X$ (where that
  structure map is the natural injection into the coproduct).
\end{prop}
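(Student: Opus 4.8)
The plan is to exhibit, for each object $X$ of $\cat M$ and each object $Z \to Y$ of $\undercat{\cat M}{Z}$, a bijection
\begin{displaymath}
  \Hom_{\undercat{\cat M}{Z}}\bigl(\F X, (Z \to Y)\bigr) \xrightarrow{\ \cong\ } \Hom_{\cat M}\bigl(X, \U(Z \to Y)\bigr) = \Hom_{\cat M}(X, Y)
\end{displaymath}
that is natural in both $X$ and $Z \to Y$; by the standard characterization of a pair of adjoint functors, this is exactly the assertion that $\U$ is right adjoint to $\F$. (Note that $\F$ is well defined precisely because $\cat M$ is cocomplete, so that the coproduct $Z \smallcoprod X$ exists.)

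To construct the bijection, recall that a map $\F X \to (Z \to Y)$ in $\undercat{\cat M}{Z}$ is, by \defref{def:UndCat}, a map $h\colon Z \smallcoprod X \to Y$ in $\cat M$ whose composite with the natural injection $Z \to Z \smallcoprod X$ is the structure map $Z \to Y$. By the universal property of the coproduct, giving $h$ is the same as giving a map $Z \to Y$ together with a map $X \to Y$; the requirement that the composite with the injection $Z \to Z \smallcoprod X$ be the structure map forces the first of these to be that structure map, so the data of $h$ reduce to the choice of the second, an arbitrary map $X \to Y = \U(Z\to Y)$ in $\cat M$. Thus $h \mapsto h \circ (\text{injection }X \to Z \smallcoprod X)$ is the required bijection, its inverse sending a map $X \to Y$ to the map $Z \smallcoprod X \to Y$ determined by it and the structure map $Z \to Y$.

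The only remaining point is naturality, and this is where what little care is needed goes, though it too follows at once from the universal property of the coproduct. Given a map $(Z \to Y) \to (Z \to Y')$ in $\undercat{\cat M}{Z}$, i.e.\ a map $g\colon Y \to Y'$ commuting with the structure maps, postcomposition with $g$ on the left-hand side corresponds under the bijection to postcomposition with $g$ on the right-hand side, since the injection $X \to Z \smallcoprod X$ is held fixed. Given a map $f\colon X' \to X$ in $\cat M$, the induced map $\F f\colon Z \smallcoprod X' \to Z \smallcoprod X$ is the identity on the $Z$ summand and $f$ on the other summand, so precomposition with $\F f$ on the left corresponds to precomposition with $f$ on the right. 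This gives naturality in both variables and completes the proof.
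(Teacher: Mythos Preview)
Your proof is correct and follows the same approach as the paper: both arguments rest on the universal property of the coproduct to identify maps $Z\smallcoprod X \to Y$ under $Z$ with maps $X \to Y$ in $\cat M$. The paper's proof is a one-sentence version of yours, omitting the explicit description of the bijection and the naturality check that you have written out in full.
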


\begin{proof}
  If $X$ is an object of $\cat M$ and $Z \to Y$ is an object of
  $\undercat{\cat M}{Z}$, then the universal mapping property of the
  coproduct implies that a map $\mapunder{Z}{Z\smallcoprod X}{Y}$ in
  $\undercat{\cat M}{Z}$ is entirely determined by the choice of a map
  $X \to Y$ in $\cat M$.
\end{proof}

\begin{lem}
  \label{lem:PushUnder}
  Let $\cat M$ be a cocomplete and complete category and let $Z$ be an
  object of $\cat M$.
  \begin{enumerate}
  \item The pushout in $\undercat{\cat M}{Z}$ of the diagram
    \begin{displaymath}
      \xymatrix{
        & {Z} \ar[dl] \ar[d] \ar[dr]\\
        {C}
        & {A} \ar[l] \ar[r]
        & {B}
      }
    \end{displaymath}
    is $Z \to P$ where $P$ is the pushout in $\cat M$ of the diagram
    \begin{displaymath}
      \xymatrix{
        {C}
        & {A} \ar[l] \ar[r]
        & {B}
      }
    \end{displaymath}
    and the structure map $Z \to P$ is the composition $Z \to A \to
    P$.
  \item The pullback in $\undercat{\cat M}{Z}$ of the diagram
    \begin{displaymath}
      \xymatrix{
        & {Z} \ar[dl] \ar[d] \ar[dr]\\
        {X} \ar[r]
        & {Y}
        & {W} \ar[l]
      }
    \end{displaymath}
    is $Z \to P$ where $P$ is the pullback in $\cat M$ of the diagram
    \begin{displaymath}
      \xymatrix{
        {X} \ar[r]
        & {Y}
        & {W} \ar[l]
      }
    \end{displaymath}
    and the structure map $Z \to P$ is the natural map to the pullback
    in $\cat M$.
  \end{enumerate}
\end{lem}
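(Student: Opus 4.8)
The plan is to argue exactly as in the proof of \lemref{lem:SamePush}: in each of the two cases I would check that the object described, equipped with its evident maps, has the universal mapping property characterizing the pushout (respectively the pullback) in $\undercat{\cat M}{Z}$, and is therefore that pushout (respectively pullback).

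For part~(1), I would start with a test object $Z \to T$ of $\undercat{\cat M}{Z}$ together with maps $C \to T$ and $B \to T$ in $\undercat{\cat M}{Z}$ whose composites with $A \to C$ and with $A \to B$ agree. Discarding the structure maps over $Z$, these give a cocone in $\cat M$ under the span $C \leftarrow A \to B$, so there is a unique map $P \to T$ in $\cat M$ through which $C \to T$ and $B \to T$ factor. The one thing to verify is that $P \to T$ is a morphism of $\undercat{\cat M}{Z}$, i.e.\ that $Z \to A \to P \to T$ equals the structure map $Z \to T$; but $Z \to A \to P \to T = Z \to A \to C \to T = Z \to T$, the last equality holding because $C \to T$ is a map in $\undercat{\cat M}{Z}$. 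Uniqueness of $P \to T$ in $\undercat{\cat M}{Z}$ is inherited from its uniqueness in $\cat M$. (One could instead note that the forgetful functor to $\cat M$ creates colimits over connected indexing categories, of which a pushout square is an example, but the hands-on check sidesteps that.)

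For part~(2), I would dualize. First I would observe that the structure map $Z \to P$ makes sense: the composites $Z \to X \to Y$ and $Z \to W \to Y$ both equal the structure map $Z \to Y$ (because $X \to Y$ and $W \to Y$ are maps in $\undercat{\cat M}{Z}$), so the cone $(Z \to X,\ Z \to W)$ over $X \to Y \leftarrow W$ induces a map $Z \to P$. Then, given a test object $Z \to T$ with maps $T \to X$ and $T \to W$ in $\undercat{\cat M}{Z}$ agreeing into $Y$, the universal property of the pullback in $\cat M$ produces a unique $T \to P$ in $\cat M$; to see it is a morphism of $\undercat{\cat M}{Z}$ I would check that $Z \to T \to P$ and $Z \to P$ agree after composing with the two projections to $X$ and to $W$, which is immediate since $T \to X$ and $T \to W$ are maps under $Z$. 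Alternatively, since \propref{prop:UndAdj} exhibits $\U$ as a right adjoint it preserves all limits, in particular pullbacks, so the underlying object is already $P$ and only the (forced) identification of the structure map remains.

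I expect no real obstacle here: the argument is routine diagram chasing, just as for \lemref{lem:SamePush}. The only places that call for a moment's care are confirming that each induced map respects the maps to or from $Z$, and---for the pushout---resisting the temptation to assert that the forgetful functor from the undercategory preserves colimits in general, since it preserves only the connected ones.
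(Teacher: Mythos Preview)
Your proposal is correct and follows the same approach as the paper: the paper's proof simply asserts that the described constructions possess the universal mapping properties characterizing the pushout and pullback in $\undercat{\cat M}{Z}$, and you have supplied the routine verification of exactly that. Your alternative remarks (connected colimits, $\U$ as a right adjoint) are sound extras but not needed.
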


\begin{proof}
  The described constructions possess the universal mapping properties
  that characterize the pushout (or pullback) in $\undercat{\cat
    M}{Z}$.
\end{proof}

\begin{prop}
  \label{prop:UndPush}
  Let $\cat M$ be a cocomplete category, let $Z$ be an object of $\cat
  M$, and let $\adj{\F}{\cat M}{\undercat{\cat M}{Z}}{\U}$ be the
  adjoint pair of \propref{prop:UndAdj}.  If $f\colon A \to B$ is a
  map in $\cat M$ and 
  \begin{displaymath}
    \xymatrix{
      & {Z} \ar[dl] \ar[dr]^{i_{X}}\\
      {Z\smallcoprod A} \ar[rr]_-{i_{X}\smallcoprod g}
      && {X}
    }
  \end{displaymath}
  is a map in $\undercat{\cat M}{Z}$, then the pushout in
  $\undercat{\cat M}{Z}$ of the diagram
  \begin{displaymath}
    \xymatrix{
      & {Z} \ar[dl]_{i_{X}} \ar[d] \ar[dr]\\
      {X}
      & {Z\smallcoprod A} \ar[l]^-{i_{X}\smallcoprod g}
      \ar[r]_-{\Id_{Z}\smallcoprod f}
      & {Z\smallcoprod B}
    }
  \end{displaymath}
  is $Z \to P$ where $P$ is the pushout in $\cat M$ of the diagram
  \begin{displaymath}
    \xymatrix{
      {X}
      & {A} \ar[l]_{g} \ar[r]^{f}
      & {B}
    }
  \end{displaymath}
  and the structure map $Z \to P$ is the composition $Z
  \xrightarrow{i_{X}} X \to P$.
\end{prop}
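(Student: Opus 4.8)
The plan is to deduce this from \lemref{lem:PushUnder}(1), whose assertion about pushouts uses only the cocompleteness of $\cat M$ (completeness is needed there solely for the statement about pullbacks), and then to recognize the pushout in $\cat M$ that it produces. Applied to the span in the statement, \lemref{lem:PushUnder}(1) says that the pushout in $\undercat{\cat M}{Z}$ is $Z \to Q$, where $Q$ is the pushout in $\cat M$ of
\[ X \xleftarrow{\ i_{X}\smallcoprod g\ } Z\smallcoprod A \xrightarrow{\ \Id_{Z}\smallcoprod f\ } Z\smallcoprod B \]
and the structure map is the composite of the coproduct injection $Z \to Z\smallcoprod A$ with the canonical map $Z\smallcoprod A \to Q$. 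It therefore remains to identify $Q$ with $P$ over $Z$.

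The point is that adjoining a coproduct summand $Z$ commutes with forming the pushout along $f$. Concretely, the square in $\cat M$ with horizontal edges the coproduct injections $A \to Z\smallcoprod A$ and $B \to Z\smallcoprod B$, left edge $f$, and right edge $\Id_{Z}\smallcoprod f$, is a pushout: by the universal property of the coproduct, a cocone on $B \xleftarrow{f} A \to Z\smallcoprod A$ is the same as an unconstrained pair of maps out of $B$ and out of $Z$, i.e.\ a map out of $Z\smallcoprod B$. Pasting this square together with the pushout square that defines $Q$ along their common edge $\Id_{Z}\smallcoprod f$, the pasting law for pushouts shows the resulting rectangle is a pushout; since its two edges out of $A$ are $f\colon A \to B$ and $(i_{X}\smallcoprod g)\circ(A \to Z\smallcoprod A) = g\colon A \to X$, that rectangle exhibits $Q$ as the pushout in $\cat M$ of $X \xleftarrow{g} A \xrightarrow{f} B$, so $Q \homeq P$ compatibly with the maps out of $X$. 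Chasing the structure map of the first paragraph through this identification — using that the canonical map $Z\smallcoprod A \to Q$ factors as $Z\smallcoprod A \xrightarrow{i_{X}\smallcoprod g} X \to Q$ and that $i_{X}\smallcoprod g$ restricted to the summand $Z$ is $i_{X}$ — shows it becomes the composite $Z \xrightarrow{i_{X}} X \to P$, as asserted.

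The argument is purely formal, so there is no real obstacle; the only content is the elementary observation that forming a coproduct with $Z$ commutes with pushouts, together with the bookkeeping needed to see that the structure maps match. If one prefers to avoid \lemref{lem:PushUnder} (and its stronger hypotheses) altogether, one can instead verify the universal property of the pushout directly in $\undercat{\cat M}{Z}$: applying the adjunction of \propref{prop:UndAdj} to the leg out of $Z\smallcoprod B$, a cocone on the span with vertex $Z \to T$ unwinds to a map $x\colon X \to T$ with $x\circ i_{X}$ equal to the structure map of $T$ together with a map $b\colon B \to T$ satisfying $x\circ g = b\circ f$ — precisely the data classified by a map $P \to T$ respecting structure maps — and the two classifications are mutually inverse.
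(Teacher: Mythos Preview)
Your argument is correct. The paper's own proof is a single sentence: it simply asserts that the described object has the required universal property in $\undercat{\cat M}{Z}$, leaving the verification to the reader. Your alternative paragraph at the end (unwinding a cocone via the adjunction of \propref{prop:UndAdj}) is exactly that verification made explicit, so in that sense you recover the paper's approach.

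Your main argument, however, takes a genuinely different route: rather than checking the universal property directly, you reduce to \lemref{lem:PushUnder}(1) and then perform a computation entirely in $\cat M$, using that $Z\smallcoprod(-)$ preserves pushouts together with the pasting law to identify the pushout of $X \leftarrow Z\smallcoprod A \to Z\smallcoprod B$ with that of $X \leftarrow A \to B$. This is a bit more machinery than the paper invokes, but it has the virtue of making the relationship between \propref{prop:UndPush} and \lemref{lem:PushUnder} transparent, and of isolating the only nontrivial step (the coproduct-pushout commutation) cleanly. Either approach is adequate for such an elementary statement; the paper opts for brevity, you for structure.
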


\begin{proof}
  The described construction possesses the universal mapping property
  required of the pushout in $\undercat{\cat M}{Z}$.
\end{proof}

\begin{prop}
  \label{prop:RelCelUnd}
  Let $\cat M$ be a cocomplete category, let $Z$ be an object of $\cat
  M$, and let $\adj{\F}{\cat M}{\undercat{\cat M}{Z}}{\U}$ be the
  adjoint pair of \propref{prop:UndAdj}.  If $S$ is a set of maps in
  $\cat M$, then a relative $\F S$-cell complex (see
  \cite[Definition~10.5.8]{MCL}) $\mapunder{Z}{X}{Y}$ in
  $\undercat{\cat M}{Z}$ is a relative $S$-cell complex $X \to Y$ in
  $\cat M$ with structure maps defined by composition with the
  structure map of $Z \to X$.
\end{prop}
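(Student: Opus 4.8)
The plan is to follow the pattern of \lemref{lem:RelCelOver}, with \propref{prop:UndPush} playing the role that \lemref{lem:SamePush} played there. Recall from Definition~10.5.8 of \cite{MCL} that a relative $\F S$-cell complex is the transfinite composition of a $\lambda$-sequence
\begin{displaymath}
  (Z \to X) = (Z \to Y_{0}) \to (Z \to Y_{1}) \to \cdots \to (Z \to Y_{\beta}) \to \cdots
\end{displaymath}
in $\undercat{\cat M}{Z}$ in which each map $(Z \to Y_{\beta}) \to (Z \to Y_{\beta+1})$ is a pushout in $\undercat{\cat M}{Z}$ of an element of $\F S$; the task is to match such presentations, term by term, with presentations of $X \to Y$ as a relative $S$-cell complex in $\cat M$, keeping track of the structure maps.

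First I would record two observations. \emph{Successor stages:} by the adjunction of \propref{prop:UndAdj} a map from $\F A = (Z \to Z\smallcoprod A)$ to an object $(Z \to W)$ of $\undercat{\cat M}{Z}$ is precisely a map $A \to W$ in $\cat M$, and it has the form $i_{W}\smallcoprod g$ occurring in \propref{prop:UndPush}; hence that proposition applies to the pushout square producing $Z \to Y_{\beta+1}$ and identifies its image under the forgetful functor $\U$ with the pushout in $\cat M$ of the corresponding element $f$ of $S$ along the attaching map $A \to Y_{\beta}$, the structure map $Z \to Y_{\beta+1}$ being the composite $Z \to Y_{\beta} \to Y_{\beta+1}$. (If one's convention attaches a coproduct of elements of $\F S$ at each stage, the argument of \propref{prop:UndPush} applies verbatim with $f$ replaced by the corresponding coproduct of elements of $S$.) \emph{Limit stages:} for a limit ordinal $\beta$, the object $(Z \to Y_{\beta})$ is the colimit in $\undercat{\cat M}{Z}$ of the preceding objects, and by the same universal-mapping-property argument that proves \lemref{lem:PushUnder} this colimit is computed by forming the colimit $Y_{\beta}$ of the $Y_{\alpha}$ in $\cat M$ and equipping it with the structure map $Z \to Y_{0} \to Y_{\beta}$.

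Putting these together, applying $\U$ to the displayed $\lambda$-sequence exhibits $X \to Y$ as a relative $S$-cell complex in $\cat M$ whose intermediate objects are the $Y_{\beta}$ and whose structure maps $Z \to Y_{\beta}$ are the composites of $Z \to X$ with the presentation maps $X \to Y_{\beta}$; conversely, given any presentation of $X \to Y$ as a relative $S$-cell complex together with the fixed map $Z \to X$, the same identifications run backwards (using \propref{prop:UndPush} at successor ordinals and the limit-stage description above) produce a $\lambda$-sequence in $\undercat{\cat M}{Z}$ exhibiting $(Z \to X) \to (Z \to Y)$ as a relative $\F S$-cell complex with exactly those structure maps. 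Everything here is formal once \propref{prop:UndPush} is in hand; the only point that requires any care is checking at limit ordinals that the structure maps are the asserted composites, and that is routine, so I anticipate no genuine obstacle.
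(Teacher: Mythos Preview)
Your proposal is correct and follows the same approach as the paper, which simply records that the result follows from \propref{prop:UndPush}. You have merely spelled out in detail the transfinite induction that the paper leaves implicit.
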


\begin{proof}
  This follows from \propref{prop:UndPush}.
\end{proof}

\begin{prop}
  \label{prop:UndSmOb}
  Let $\cat M$ be a cocomplete category, let $Z$ be an object of $\cat
  M$, and let $\adj{\F}{\cat M}{\undercat{\cat M}{Z}}{\U}$ be the
  adjoint pair of \propref{prop:UndAdj}.  If $S$ is a set of maps in
  $\cat M$ that permits the small object argument (see
  \cite[Definition~10.5.15]{MCL}), then $\F S$ is a set of maps in
  $\undercat{\cat M}{Z}$ that permits the small object argument.
\end{prop}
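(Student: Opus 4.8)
The plan is to reduce the statement about $\F S$ in $\undercat{\cat M}{Z}$ to the corresponding statement about $S$ in $\cat M$, using the adjunction $\adj{\F}{\cat M}{\undercat{\cat M}{Z}}{\U}$ and the identification of relative $\F S$-cell complexes provided by \propref{prop:RelCelUnd}. Recall (see \cite[Definition~10.5.15]{MCL}) that $S$ permits the small object argument if there is a cardinal $\kappa$ such that the domain of every element of $S$ is $\kappa$-small relative to the subcategory of relative $S$-cell complexes. So the two things to check are: first, that the domains of elements of $\F S$ are small relative to relative $\F S$-cell complexes, and second, that relative $\F S$-cell complexes in $\undercat{\cat M}{Z}$ are the sort of maps one is allowed to test smallness against.

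First I would fix a cardinal $\kappa$ witnessing that $S$ permits the small object argument in $\cat M$, so that for each $f\colon A \to B$ in $S$ the object $A$ is $\kappa$-small relative to the relative $S$-cell complexes. The domain of the element $\F f$ of $\F S$ is $Z \to Z\smallcoprod A$. I claim this object is $\kappa$-small relative to relative $\F S$-cell complexes in $\undercat{\cat M}{Z}$. The key computational point is the adjunction isomorphism: for any object $Z \to Y$ of $\undercat{\cat M}{Z}$ we have a natural bijection
\begin{displaymath}
  \Hom_{\undercat{\cat M}{Z}}(\F A, Z \to Y) \homeq \Hom_{\cat M}(A, Y) \Period
\end{displaymath}
By \propref{prop:RelCelUnd}, a $\lambda$-sequence of relative $\F S$-cell complexes in $\undercat{\cat M}{Z}$ is, after applying $\U$, precisely a $\lambda$-sequence of relative $S$-cell complexes in $\cat M$, and $\U$ (being a right adjoint, but more simply because it is evaluation) commutes with the colimits of such sequences — indeed the colimit in $\undercat{\cat M}{Z}$ of such a sequence has underlying object the colimit in $\cat M$ (this is the sequential special case of \lemref{lem:PushUnder}). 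Combining these, the natural map
\begin{displaymath}
  \colim_{\beta<\lambda}\Hom_{\undercat{\cat M}{Z}}\bigl(\F A, X_{\beta}\bigr)
  \longrightarrow
  \Hom_{\undercat{\cat M}{Z}}\bigl(\F A, \colim_{\beta<\lambda}X_{\beta}\bigr)
\end{displaymath}
is naturally identified with the corresponding map for $A$ and the underlying $\lambda$-sequence in $\cat M$, which is an isomorphism for $\lambda$ a $\kappa$-filtered ordinal by hypothesis. Hence $\F A$ is $\kappa$-small relative to relative $\F S$-cell complexes, and the same cardinal $\kappa$ witnesses that $\F S$ permits the small object argument.

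The only genuinely delicate point — and the step I would be most careful about — is matching up exactly what ``relative $\F S$-cell complex'' means with the structure of the $\lambda$-sequences appearing in the definition of the small object argument, since \cite[Definition~10.5.15]{MCL} quantifies smallness over the full subcategory of relative $S$-cell complexes (equivalently over transfinite compositions of pushouts of elements of $S$); here \propref{prop:UndPush} and \propref{prop:RelCelUnd} do the work of showing that this subcategory in $\undercat{\cat M}{Z}$ is carried by $\U$ onto the corresponding subcategory in $\cat M$ and that the relevant colimits are computed on underlying objects. Once that bookkeeping is in place, everything else is the formal transport of the smallness estimate across the adjunction, and no new cardinal arithmetic is required.
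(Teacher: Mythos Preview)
Your argument is correct and follows exactly the paper's approach, whose proof reads in full: ``This follows from \propref{prop:RelCelUnd} and the adjointness of the functors $\F$ and $\U$.'' One small slip worth fixing: $\U$ being a \emph{right} adjoint is not a reason for it to preserve colimits (right adjoints preserve limits); fortunately you do not rely on this and immediately give the correct reason, namely that colimits in $\undercat{\cat M}{Z}$ are computed on underlying objects (the sequential analogue of \lemref{lem:PushUnder}).
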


\begin{proof}
  This follows from \propref{prop:RelCelUnd} and the adjointness of
  the functors $\F$ and $\U$.
\end{proof}

\begin{thm}
  \label{thm:UndCofGen}
  Let $\cat M$ be a cofibrantly generated model category (see
  \cite[Definition~11.1.2]{MCL}) with generating cofibrations $I$ and
  generating trivial cofibrations $J$.  If $Z$ is an object of $\cat
  M$ and $\adj{\F}{\cat M}{\undercat{\cat M}{Z}}{\U}$ is the adjoint
  pair of \propref{prop:UndAdj}, then the standard model category
  structure on $\undercat{\cat M}{Z}$ (in which a map
  $\mapunder{Z}{X}{Y}$ is a cofibration, fibration, or weak
  equivalence in $\undercat{\cat M}{Z}$ if and only if the map $X \to
  Y$ is, respectively, a cofibration, fibration, or weak equivalence
  in $\cat M$) is cofibrantly generated, with generating cofibrations
  \begin{displaymath}
    \F I =
    \left\{
    \mapunder{Z}{Z\smallcoprod X}{Z\smallcoprod Y}
    = \F(A \to B)
    \suchthat (A \to B) \in I\right\}
  \end{displaymath}
  and generating trivial cofibrations
  \begin{displaymath}
    \F J =
    \left\{
    \mapunder{Z}{Z\smallcoprod X}{Z\smallcoprod Y}
    = \F(A \to B)
    \suchthat (A \to B) \in J\right\}
  \Period
  \end{displaymath}
\end{thm}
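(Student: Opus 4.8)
The plan is to verify directly the two conditions in the definition of a cofibrantly generated model category (\cite[Definition~11.1.2]{MCL}) for the sets $\F I$ and $\F J$, reducing each condition to the corresponding property of $I$ and $J$ in $\cat M$ by means of the adjoint pair $\adj{\F}{\cat M}{\undercat{\cat M}{Z}}{\U}$ of \propref{prop:UndAdj}.  The existence of the standard model category structure on $\undercat{\cat M}{Z}$ is taken as given (see the introduction); only its cofibrant generation, with the asserted generating sets, is at issue.

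First I would note that \propref{prop:UndSmOb} shows that $\F I$ and $\F J$ permit the small object argument, since $I$ and $J$ do.

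Next I would use the elementary fact that, for an adjoint pair $\F \dashv \U$, a lifting problem of a map of $\undercat{\cat M}{Z}$ against $\F(A \to B)$ transposes to a lifting problem in $\cat M$ of $\U$ of that map against $A \to B$, and that a solution of one transposes to a solution of the other.  Hence a map $\mapunder{Z}{X}{Y}$ has the right lifting property with respect to every element of $\F I$ if and only if $X \to Y$ has the right lifting property with respect to every element of $I$, that is (since $\cat M$ is cofibrantly generated with generating cofibrations $I$), if and only if $X \to Y$ is a trivial fibration in $\cat M$, that is (by the definition of the model category structure on $\undercat{\cat M}{Z}$), if and only if $\mapunder{Z}{X}{Y}$ is a trivial fibration in $\undercat{\cat M}{Z}$.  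The same argument with $J$ in place of $I$ shows that a map of $\undercat{\cat M}{Z}$ has the right lifting property with respect to every element of $\F J$ if and only if it is a fibration in $\undercat{\cat M}{Z}$.

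Together with the previous paragraph this is exactly what \cite[Definition~11.1.2]{MCL} requires, so $\undercat{\cat M}{Z}$ is cofibrantly generated with generating cofibrations $\F I$ and generating trivial cofibrations $\F J$.  I do not expect a serious obstacle: the substantive work has already been isolated in \propref{prop:UndPush}, \propref{prop:RelCelUnd}, and \propref{prop:UndSmOb}, and what remains is the routine transposition of lifting problems across the adjunction; should one also wish to record that $\F I$ and $\F J$ consist of cofibrations, respectively trivial cofibrations, it suffices to observe that $\U\F(A \to B)$ is the map $Z\smallcoprod A \to Z\smallcoprod B$, which is a cobase change of $A \to B$ in $\cat M$.
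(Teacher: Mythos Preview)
Your proof is correct, but it follows a different route from the paper's.  The paper invokes the transfer theorem \cite[Theorem~11.3.2]{MCL}: it first checks that $\F I$ and $\F J$ permit the small object argument (\propref{prop:UndSmOb}) and that $\U$ takes relative $\F J$-cell complexes to weak equivalences (\propref{prop:RelCelUnd}), which produces \emph{some} cofibrantly generated model structure on $\undercat{\cat M}{Z}$ with these generating sets; it then argues, using the adjunction for the fibrations and the fact that fibrations and weak equivalences determine cofibrations, that this transferred structure coincides with the standard one.  You instead take the standard model structure as given and verify \cite[Definition~11.1.2]{MCL} directly, using only the transposition of lifting problems across $\F \dashv \U$ together with \propref{prop:UndSmOb}.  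Your argument is shorter and avoids the detour through \propref{prop:RelCelUnd} and the transfer theorem; the paper's approach, on the other hand, would also yield an independent construction of the model structure if its existence were not already assumed.
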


\begin{proof}
  We will use \cite[Theorem~11.3.2]{MCL} to show that there is a
  cofibrantly generated model category structure on $\undercat{\cat
    M}{Z}$ with generating cofibrations $\F I$ and generating trivial
  cofibration $\F J$, after which we will show that this coincides
  with the standard model category structure on $\undercat{\cat
    M}{Z}$.

  To apply \cite[Theorem~11.3.2]{MCL}, we must show that
  \begin{enumerate}
  \item both of the sets $\F I$ and $\F J$ permit the small object
    argument, and
  \item $\U$ takes relative $\F J$-cell complexes in $\undercat{\cat
      M}{Z}$ to weak equivalences in $\cat M$.
  \end{enumerate}
  The first condition follows from \propref{prop:UndSmOb}, and the
  second condition follows from \propref{prop:RelCelUnd}, since a
  relative $J$-cell complex is a trivial cofibration in $\cat M$.
  
  Thus, $\F I$ and $\F J$ are the generating cofibrations and
  generating trivial cofibrations of some model category structure on
  $\undercat{\cat M}{Z}$.  To see that this is the standard one, we
  must show that a map in $\undercat{\cat M}{Z}$ is a cofibration,
  fibration, or weak equivalence if and only if its image under $\U$
  is, respectively, a cofibration, fibration, or weak equivalence in
  $\cat M$.  For the weak equivalences, this follows from
  \cite[Theorem~11.3.2]{MCL}.  Since the fibrations of $\undercat{\cat
    M}{Z}$ are the maps with the right lifting property with respect
  to every element of $\F J$, the adjointness of $\F$ and $\U$ implies
  that these are exactly the maps whose images under $\U$ have the
  right lifting property with respect to $J$, i.e., exactly the maps
  whose images under $\U$ are fibrations in $\cat M$.  Finally, since
  the fibrations and the weak equivalences of a model category
  structure determine the cofibrations, the two model category
  structures on $\undercat{\cat M}{Z}$ must have the same cofibrations
  as well.
\end{proof}

\begin{thm}
  \label{thm:Under}
  Let $\cat M$ be a model category and let $Z$ be an object of $\cat
  M$.
  \begin{enumerate}
  \item If $\cat M$ is cofibrantly generated, then so is
    $\undercat{\cat M}{Z}$.
  \item If $\cat M$ is cellular, then so is $\undercat{\cat M}{Z}$.
  \item If $\cat M$ is left proper, right proper, or proper, then so
    is $\undercat{\cat M}{Z}$.
  \end{enumerate}
\end{thm}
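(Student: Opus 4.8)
The plan is to mirror, for undercategories, the structure of the proof of \thmref{thm:Over}. For part~1, I would simply invoke \thmref{thm:UndCofGen}, which explicitly exhibits a cofibrantly generated model category structure on $\undercat{\cat M}{Z}$ with generating cofibrations $\F I$ and generating trivial cofibrations $\F J$, and which identifies this structure with the standard one. For part~2, recall that a cellular model category is a cofibrantly generated model category in which (i) the domains and codomains of the generating cofibrations are compact, (ii) the domains of the generating trivial cofibrations are small relative to the cofibrations, and (iii) the cofibrations are effective monomorphisms (see \cite[Definition~12.1.1]{MCL}). The generating (trivial) cofibrations of $\undercat{\cat M}{Z}$ are the maps $\F(A \to B)$, whose domains and codomains are $Z \smallcoprod A$ and $Z \smallcoprod B$; I would check that each of the three conditions is inherited, using \propref{prop:UndPush} and \propref{prop:RelCelUnd} to translate colimits and relative $\F I$-cell complexes in $\undercat{\cat M}{Z}$ into the corresponding colimits and relative $I$-cell complexes in $\cat M$ (with structure maps adjoined). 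For part~3, I would use \lemref{lem:PushUnder}: since the forgetful functor $\U$ creates and reflects the relevant pushouts along cofibrations and pullbacks along fibrations, and detects weak equivalences, cofibrations, and fibrations, left and right properness of $\cat M$ transfer directly to $\undercat{\cat M}{Z}$.

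For part~2 in more detail: compactness of $Z \smallcoprod A$ and $Z \smallcoprod B$ relative to the subcategory of cofibrations of $\undercat{\cat M}{Z}$ should follow from compactness of $A$ and $B$ in $\cat M$, because a relative $\F I$-cell complex in $\undercat{\cat M}{Z}$ is, by \propref{prop:RelCelUnd}, a relative $I$-cell complex $X \to Y$ in $\cat M$ equipped with a structure map, so a map out of $Z \smallcoprod A$ into such a colimit is, by the adjunction of \propref{prop:UndAdj}, the same as a map out of $A$ in $\cat M$, which factors through a finite (resp.\ presentation-bounded) stage by compactness in $\cat M$. Smallness of the domains of $\F J$ relative to the cofibrations of $\undercat{\cat M}{Z}$ follows the same way via \propref{prop:RelCelUnd} and the adjunction. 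For the effective-monomorphism condition: if $\mapunder{Z}{X}{Y}$ is a cofibration, then $X \to Y$ is an effective monomorphism in $\cat M$, i.e.\ $X$ is the equalizer of the two maps $Y \rightrightarrows Y \smallcoprod_{X} Y$; since the forgetful functor $\U$ creates this pushout (by \lemref{lem:PushUnder}) and creates equalizers (limits in $\undercat{\cat M}{Z}$ are computed in $\cat M$), the same diagram exhibits $\mapunder{Z}{X}{Y}$ as an effective monomorphism in $\undercat{\cat M}{Z}$.

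The main obstacle I anticipate is getting the compactness and effective-monomorphism verifications in part~2 to go through cleanly against the \emph{right} subcategory — the definition of compactness in \cite[Definition~10.8.1]{MCL} refers to a chosen subcategory, and for a cellular model category the relevant subcategory is that of cofibrations; one must be careful that relative $\F I$-cell complexes in $\undercat{\cat M}{Z}$ correspond under $\U$ not to arbitrary relative $I$-cell complexes but exactly to those, which is precisely what \propref{prop:RelCelUnd} provides, and that the colimit of such a cell complex in $\undercat{\cat M}{Z}$ is computed by $\U$ as the colimit in $\cat M$ (again \propref{prop:RelCelUnd}, or directly \propref{prop:UndPush} at each stage). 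Once that correspondence is firmly in hand, each of the three cellularity conditions reduces by adjunction to the corresponding condition in $\cat M$, and parts~1 and~3 are immediate. Concretely, the proof I would write is short:

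\begin{proof}
  Part~1 follows from \thmref{thm:UndCofGen}.  For part~2, by
  \thmref{thm:UndCofGen} the model category $\undercat{\cat M}{Z}$ is
  cofibrantly generated with generating cofibrations $\F I$ and
  generating trivial cofibrations $\F J$; it remains to verify the
  three conditions of \cite[Definition~12.1.1]{MCL}.  By
  \propref{prop:RelCelUnd} a relative $\F I$-cell complex
  $\mapunder{Z}{X}{Y}$ in $\undercat{\cat M}{Z}$ is a relative
  $I$-cell complex $X \to Y$ in $\cat M$ together with a structure map,
  and $\U$ carries the defining presentation to the corresponding
  presentation in $\cat M$; combined with the adjointness of $\F$ and
  $\U$ (\propref{prop:UndAdj}), this shows that the domains and
  codomains of $\F I$ are compact and that the domains of $\F J$ are
  small relative to the cofibrations of $\undercat{\cat M}{Z}$,
  because the corresponding objects of $\cat M$ have those properties.
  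If $\mapunder{Z}{X}{Y}$ is a cofibration of $\undercat{\cat M}{Z}$,
  then $X \to Y$ is an effective monomorphism in $\cat M$, and since
  $\U$ creates the pushout $Y\smallcoprod_{X} Y$
  (\lemref{lem:PushUnder}) and the equalizer exhibiting $X$, the same
  diagram exhibits $\mapunder{Z}{X}{Y}$ as an effective monomorphism
  in $\undercat{\cat M}{Z}$.  Thus $\undercat{\cat M}{Z}$ is cellular.
  Part~3 follows from \lemref{lem:PushUnder}, since a map in
  $\undercat{\cat M}{Z}$ is a cofibration, fibration, or weak
  equivalence if and only if its image under $\U$ is, and $\U$
  preserves and reflects pushouts along cofibrations and pullbacks
  along fibrations.
\end{proof}
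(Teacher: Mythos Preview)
Your proposal is correct and follows the same approach as the paper: part~1 via \thmref{thm:UndCofGen}, part~2 via \thmref{thm:UndCofGen} together with \propref{prop:RelCelUnd}, and part~3 via \lemref{lem:PushUnder}. The paper's proof is a single sentence citing exactly these results, whereas you have unpacked the cellularity verification in part~2 (compactness, smallness, effective monomorphisms) explicitly; your expanded argument is sound, though note that in \cite[Definition~12.1.1]{MCL} the smallness condition on the domains of $J$ is relative to $I$ (i.e., to relative $I$-cell complexes) rather than to all cofibrations---this does not affect your argument, since \propref{prop:RelCelUnd} is precisely what you need either way.
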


\begin{proof}
  Part~1 follows from \thmref{thm:UndCofGen}, part~2 follows from
  \thmref{thm:UndCofGen} and \propref{prop:RelCelUnd}, and part~3
  follows from \lemref{lem:PushUnder}.
\end{proof}



\end{document}